\documentclass{amsart}

\usepackage{cite}


\newtheorem{theorem}{Theorem}[section]
\newtheorem{example}[theorem]{Example}
\theoremstyle{definition}
\newtheorem{definition}[theorem]{Definition}
\theoremstyle{remark}
\newtheorem{remark}[theorem]{Remark}


\numberwithin{equation}{section}


\begin{document}

\title[Solving Abel integral equations of first kind]{Solving
Abel integral equations of first kind\\ via fractional calculus}

\author[S. Jahanshahi]{S. Jahanshahi} 
\address{Department of Mathematics, Science and Research Branch,
Islamic Azad University, Tehran, Iran}
\email{s.jahanshahi@iausr.ac.ir}

\author[E. Babolian]{E. Babolian} 
\address{Department of Mathematics, Science and Research Branch,
Islamic Azad University, Tehran, Iran}
\email{Babolian@khu.ac.ir}

\author[D. F. M. Torres]{D. F. M. Torres} 
\address{Center for Research and Development in Mathematics and Applications (CIDMA),
Department of Mathematics, University of Aveiro, 3810--193 Aveiro, Portugal}
\email[Corresponding author]{delfim@ua.pt}

\author[A. R. Vahidi]{A. R. Vahidi} 
\address{Department of Mathematics, Shahre Rey Branch,
Islamic Azad University, Tehran, Iran}
\email{alrevahidi@iausr.ac.ir}


\thanks{This is a preprint of a paper whose final and definite form is 
in Journal of King Saud University (Science), ISSN 1018-3647. 
Paper submitted 30/Aug/2014; revised 25/Sept/2014; accepted for publication 29/Sept/2014.
Corresponding author: Delfim F. M. Torres (delfim@ua.pt),
Department of Mathematics, University of Aveiro, 3810--193 Aveiro, Portugal.
Tel: +351 234370668, Fax: +351 234370066.}

\subjclass[2010]{26A33; 45E10}

\keywords{Abel integral equation; singular integral equations;
Caputo fractional derivatives; fractional integrals}


\begin{abstract}
We give a new method for numerically solving Abel integral equations of first kind.
An estimation for the error is obtained. The method is based on approximations
of fractional integrals and Caputo derivatives.
Using trapezoidal rule and Computer Algebra System Maple,
the exact and approximation values of three Abel integral equations are found,
illustrating the effectiveness of the proposed approach.
\end{abstract}


\maketitle


\section{Introduction}

Consider the following generalized Abel integral equation of first kind:
\begin{equation}
\label{eq:1.1}
f(x)=\int_0^x\frac{k(x,s)g(s)}{(x-s)^\alpha}ds,
\quad 0<\alpha<1, \quad 0 \leq x \leq b,
\end{equation}
where $g$ is the unknown function to be found, $f$ is a well behaved function,
and $k$ is the kernel. This equation is one of the most famous equations
that frequently appears in many physical and engineering problems, like semi-conductors,
heat conduction, metallurgy and chemical reactions \cite{bib:10,bib:11}.
In experimental physics, Abel's integral equation of first kind \eqref{eq:1.1} finds
applications in plasma diagnostics, physical electronics, nuclear physics,
optics and astrophysics \cite{Kosarev,MR1230950}. To determine the radial distribution of
the radiation intensity of a cylinder discharge in plasma physics, for example,
one needs to solve an integral equation \eqref{eq:1.1} with $\alpha = \frac{1}{2}$.
Another example of application appears when one
describes velocity laws of stellar winds \cite{MR1230950}.
If $k(x,s)=\frac{1}{\Gamma(1-\alpha)}$, then \eqref{eq:1.1} is a fractional integral
equation of order $1-\alpha$ \cite{bib:23}. This problem is a generalization
of the tautochrone problem of the calculus of variations, and is related
with the born of fractional mechanics \cite{MR1438729}.
The literature on integrals and derivatives of fractional order is now vast
and evolving: see, e.g., \cite{bib:09,bib:08,Tarasov:13,TM:K:M:11,Wang:Zhou:11}.
The reader interested in the early literature, showing that Abel's integral
equations may be solved with fractional calculus, is referred to \cite{bib:Rev1:1}.
For a concise and recent discussion on the solutions of Abel's integral equations
using fractional calculus see \cite{bib:Rev1:2}.

Many numerical methods for solving \eqref{eq:1.1} have been developed
over the past few years,
such as product integration methods \cite{bib:03,bib:04},
collocation methods \cite{bib:06},
fractional multi step methods \cite{bib:16,bib:15,bib:22},
backward Euler methods \cite{bib:03},
and methods based on wavelets \cite{MR2541683,MR2736623,MR2883895}.
Some semi analytic methods, like the Adomian decomposition method,
are also available, which produce a series solution \cite{bib:05}.
Unfortunately, the Abel integral equation \eqref{eq:1.1} is an ill-posed problem.
For $k(x,s)=\frac{1}{\Gamma(1-\alpha)}$, Gorenflo \cite{bib:10} presented
some numerical methods based on fractional calculus, e.g., using
the Grunwald--Letnikov difference approximation
\begin{equation}
\label{eq:1.2}
D^\alpha f \simeq h^{-\alpha} \sum_{r=0}^{n} (-1)^r \binom{\alpha}{r} f(x-rh).
\end{equation}
If $f$ is sufficiently smooth and vanishes at $x\leq 0$,
then formula \eqref{eq:1.2} has accuracy of order $O(h^2)$,
otherwise it has accuracy of order $O(h)$. On the other hand,
Lubich \cite{bib:16,bib:15} introduced a fractional multi-step method for the Abel integral
equation of first kind, and Plato \cite{bib:22} considered fractional multi-step methods
for weakly singular integral equations of first kind with a perturbed right-hand side.
Liu and Tao solved the fractional integral equation, transforming it into
an Abel integral equation of second kind \cite{bib:18}.
A method based on Chebyshev polynomials is given in \cite{bib:02}.
Here we propose a method to solve an Abel integral equation of first kind
based on a numerical approximation of fractional integrals and Caputo derivatives
of a given function $f$ belonging to $C^n[a,b]$ (see Theorem~\ref{thm:mr}).

The structure of the paper is as follows. In Section~\ref{sec:2} we recall the necessary definitions
of fractional integrals and derivatives and explain some useful relations between them.
Section~\ref{sec:3} reviews some numerical approximations for fractional integrals and derivatives.
The original results are then given in Section~\ref{sec:4}, where we introduce our method to approximate
the solution of the Abel equation at the given nodes and we obtain an upper bound for the error.
In Section~\ref{sec:5} some examples are solved to illustrate the accuracy of the proposed method.


\section{Definitions, Relations and Properties of Fractional Operators}
\label{sec:2}

Fractional calculus is a classical area with many good books available.
We refer the reader to \cite{book:M:Torres:12,bib:23}.

\begin{definition}
\label{def:RL:int}
Let $\alpha>0$ with $n-1<\alpha\leq n$, $n\in\mathbb{N}$, and $a<x<b$.
The left and right Riemann--Liouville fractional integrals
of order $\alpha$ of a given function $f$ are defined by
\begin{equation*}
_aJ_x^\alpha f(x)=\frac{1}{\Gamma(\alpha)}
\int_a^x(x-t)^{\alpha-1}f(t)dt
\end{equation*}
and
\begin{equation*}
_xJ_b^{\alpha}f(x)=\frac{1}{\Gamma(\alpha)}\int_x^b(t-x)^{\alpha-1}f(t)dt,
\end{equation*}
respectively, where $\Gamma$ is Euler's gamma function, that is,
\begin{equation*}
\Gamma(x)=\int_0^\infty t^{x-1}e^{-t}dt.
\end{equation*}
\end{definition}

\begin{definition}
\label{def:RL:der}
The left and right Riemann--Liouville fractional derivatives of order $\alpha>0$,
$n-1<\alpha\leq n$, $n \in\mathbb{N}$, are defined by
\begin{equation*}
_aD_x^{\alpha}f(x)=\frac{1}{\Gamma(n-\alpha)}\frac{d^n}{dx^n}\int_a^x(x-t)^{n-\alpha-1}f(t)dt
\end{equation*}
and
\begin{equation*}
_xD_b^\alpha f(x)=\frac{(-1)^n}{\Gamma(n-\alpha)}\frac{d^n}{dx^n}\int_x^b(t-x)^{n-\alpha-1}f(t)dt,
\end{equation*}
respectively.
\end{definition}

\begin{definition}
\label{def:Caputo}
The left and right Caputo fractional derivatives of order $\alpha>0$,
$n-1<\alpha\leq n$, $n \in\mathbb{N}$, are defined by
\begin{equation*}
_a^CD_x^\alpha f(x)
=\frac{1}{\Gamma(n-\alpha)}\int_a^x(x-t)^{n-\alpha-1}f^{(n)}(t)dt
\end{equation*}
and
\begin{equation*}
_x^CD_b^\alpha f(x)
=\frac{(-1)^n}{\Gamma(n-\alpha)}\int_x^b(t-x)^{n-\alpha-1}f^{(n)}(t)dt,
\end{equation*}
respectively.
\end{definition}

\begin{definition}
\label{def:GL}
Let $\alpha>0$. The Grunwald--Letnikov fractional derivatives are defined by
\begin{equation*}
D^\alpha f(x)=\lim_{h \to 0} h^{-\alpha}\sum_{r=0}^\infty (-1)^r \binom{\alpha}{r} f(x-rh)
\end{equation*}
and
\begin{equation*}
D^{-\alpha} f(x)=\lim_{h \to 0} h^{\alpha}\sum_{r=0}^\infty
\left[
\begin{array}{c}\alpha\\
r
\end {array}
\right]
f(x-rh),
\end{equation*}
where
\begin{equation*}
\left[
\begin{array}{c}
\alpha\\
r
\end{array}
\right]
=\frac{\alpha(\alpha+1)(\alpha+2) \cdots (\alpha+r-1)}{r!}.
\end{equation*}
\end{definition}

\begin{remark}
The Caputo derivatives (Definition~\ref{def:Caputo}) have some advantages
over the Riemann--Liouville derivatives (Definition~\ref{def:RL:der}).
The most well known is related with the Laplace transform method
for solving fractional differential equations. The Laplace transform
of a Riemann--Liouville derivative leads to boundary conditions containing
the limit values of the Riemann--Liouville fractional derivative at the lower terminal $x=a$.
In spite of the fact that such problems can be solved analytically, there is no physical interpretation
for such type of boundary conditions. In contrast, the Laplace transform of a Caputo derivative imposes boundary
conditions involving integer-order derivatives at $x=a$, which usually are acceptable physical conditions.
Another advantage is that the Caputo derivative of a constant function is zero,
whereas for the Riemann--Liouville it is not. For details see \cite{bib:26}.
\end{remark}

\begin{remark}
The Grunwald--Letnikov definition gives a generalization of the ordinary discretization
formulas for derivatives with integer order.
The series in Definition~\ref{def:GL} converge absolutely and uniformly
for each $\alpha>0$ and for every bounded function $f$. The discrete approximations
derived from the Grunwald--Letnikov fractional derivatives, e.g., \eqref{eq:1.2},
present some limitations. First, they frequently originate unstable
numerical methods and henceforth usually a shifted Grunwald--Letnikov
formula is used instead. Another disadvantage is that the order of accuracy
of such approximations is never higher than one. For details see \cite{bib:03}.
\end{remark}

The following relations between Caputo
and Riemann--Liouville fractional derivatives hold \cite{bib:23}:
\begin{equation*}
_a^CD_x^\alpha f(x)={_aD_x^\alpha} f(x)
-\sum_{k=0}^{n-1}\frac{f^{(k)}(a)}{\Gamma(k-\alpha+1)}(x-a)^{k-\alpha}
\end{equation*}
and
\begin{equation*}
_x^CD_b^\alpha f(x)={_xD_b^\alpha} f(x)
-\sum_{k=0}^{n-1}\frac{f^{(k)}(b)}{\Gamma(k-\alpha+1)}(b-x)^{k-\alpha}.
\end{equation*}
Therefore, if $f\in C^n[a,b]$ and $f^{(k)}(a)=0$, $k=0,1,\ldots,n-1$, then
\begin{equation*}
_a^CD_x^\alpha f= {_aD_x^\alpha} f,
\end{equation*}
if $f^{(k)}(b)=0$, $k=0,1,\ldots,n-1$, then
\begin{equation*}
_x^CD_b^\alpha f={_xD_b^\alpha} f.
\end{equation*}
Other useful properties of fractional integrals and derivatives are:
all fractional operators are linear, that is,
if $L$ is an arbitrary fractional operator, then
\begin{equation*}
L(t f+s g)=t L(f)+s L(g)
\end{equation*}
for all functions $f,g\in C^n[a,b]$ or $f,g\in L^p(a,b)$
and $t,s\in \mathbb{R}$; if $\alpha,\beta>0$, then
\begin{equation*}
J^\alpha J^\beta=J^{\alpha+\beta},
\quad D^\alpha D^\beta=D^{\alpha+\beta};
\end{equation*}
if $f\in L^\infty(a,b)$ or $f\in C^n[a,b]$ and $\alpha>0$, then
\begin{equation}
\label{eq:2.18}
_a^CD_x^\alpha \, {_aJ_x^\alpha} f=f
\quad \mathrm{ and }
\quad _x^CD_b^\alpha \, {_xJ_b^\alpha} f=f.
\end{equation}
On the other hand, if $f\in C^n[a,b]$ and $\alpha>0$, then
\begin{equation*}
_aJ_x^\alpha \, _a^CD_x^\alpha f(x)
=f(x)-\sum_{k=0}^{n-1}\frac{f^{(k)}(a)}{k!}(x-a)^k
\end{equation*}
and
\begin{equation*}
_xJ_b^\alpha \, _x^CD_b^\alpha f(x)
=f(x)-\sum_{k=0}^{n-1}\frac{f^{(k)}(b)}{k!}(b-x)^k.
\end{equation*}
From \eqref{eq:2.18} it is seen that the Caputo fractional derivatives
are the inverse operators for the Riemann--Liouville fractional integrals.
This is the reason why we choose here to use Caputo fractional derivatives
for solving Abel integral equations.


\section{Known Numerical Approximations to Fractional Operators}
\label{sec:3}

Diethelm \cite{bib:07} (see also \cite[pp.~62--63]{bib:08})
uses the product trapezoidal rule with respect to the weight function
$(t_k-\cdot)^{\alpha-1}$ to approximate Riemann--Liouville fractional integrals. More precisely,
the approximation
\begin{equation*}
\int_{t_0}^{t_k}(t_k-u)^{\alpha-1}f(u)du\simeq\int_{t_0}^{t_k}(t_k-u)^{\alpha-1}\widetilde{f_k}(u)du,
\end{equation*}
where $\widetilde{f_k}$ is the piecewise linear interpolator for $f$ whose nodes are chosen at
$t_j=jh$, $j=0,1,\ldots,n$ and $h=\frac{b-a}{n}$, is considered.
Odibat \cite{bib:20,bib:21} uses a modified trapezoidal rule to approximate the fractional integral
${_0J_x^\alpha}f(x)$ (Theorem~\ref{thm:apprx:FI}) and the Caputo fractional
derivative $_a^CD_x^\alpha f(x)$ (Theorem~\ref{thm:apprx:Cap}) of order $\alpha>0$.

\begin{theorem}[See \cite{bib:20,bib:21}]
\label{thm:apprx:FI}
Let $b>0$, $\alpha>0$, and suppose that the interval
$[0,b]$ is subdivided into $k$ subintervals
$[x_j,x_{j+1}]$, $j = 0,\ldots,k-1$,
of equal distances $h=\frac bk$ by using the nodes $x_j=jh$, $j=0,1,\ldots,k$.
Then the modified trapezoidal rule
\begin{multline*}
T(f,h,\alpha)=\frac{h^\alpha}{\Gamma(\alpha+2)}((k-1)^{\alpha+1}-(k-\alpha-1)k^\alpha)f(0)+f(b))\\
+\frac{h^\alpha}{\Gamma(\alpha+2)}\sum_{j=1}^{k-1}\left((k-j+1)^{\alpha+1}
-2(k-j)^{\alpha+1}+(k-j-1)^{\alpha+1}\right)f(x_j)
\end{multline*}
is an approximation to the fractional integral ${_0J_x^\alpha} f|_{x=b}$:
\begin{equation*}
{_0J_x^\alpha} f|_{x=b}=T(f,h,\alpha)-E_T(f,h,\alpha).
\end{equation*}
Furthermore, if $f\in C^2[0,b]$, then
\begin{equation*}
|E_T(f,h,\alpha)|\leq c'_{\alpha}\|f''\|_\infty b^\alpha h^2=O(h^2),
\end{equation*}
where $c'_{\alpha}$ is a constant depending only on $\alpha$.
\end{theorem}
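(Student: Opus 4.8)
The plan is to follow the classical product-integration strategy: approximate $f$ by its piecewise linear interpolant on the prescribed nodes and then integrate this interpolant \emph{exactly} against the singular weight $(b-t)^{\alpha-1}$. Writing out the definition,
\[
{_0J_x^\alpha} f|_{x=b}=\frac{1}{\Gamma(\alpha)}\int_0^b(b-t)^{\alpha-1}f(t)\,dt,
\]
I would let $\widetilde{f}$ denote the continuous piecewise linear function that agrees with $f$ at every node $x_j=jh$; on each subinterval $[x_j,x_{j+1}]$ it is the straight line $\widetilde{f}(t)=f(x_j)+\frac{t-x_j}{h}\bigl(f(x_{j+1})-f(x_j)\bigr)$. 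The quadrature rule $T(f,h,\alpha)$ is, by construction, the exact value of $\frac{1}{\Gamma(\alpha)}\int_0^b(b-t)^{\alpha-1}\widetilde{f}(t)\,dt$, and the whole proof reduces to (i) confirming this identity and (ii) estimating the interpolation error against the weight.

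The first and harder task is the purely computational verification that $\frac{1}{\Gamma(\alpha)}\int_0^b(b-t)^{\alpha-1}\widetilde{f}(t)\,dt$ equals the stated sum. I would split $\int_0^b$ into the $k$ pieces $\int_{x_j}^{x_{j+1}}$ and, on each piece, integrate the two monomials $(b-t)^{\alpha-1}$ and $t\,(b-t)^{\alpha-1}$ in closed form, for instance through the substitution $u=b-t$ together with the antiderivatives of $u^{\alpha-1}$ and $u^{\alpha}$. Each nodal value $f(x_j)$ then collects contributions from the two subintervals adjacent to $x_j$; substituting $b=kh$ and $x_j=jh$, the powers of $h$ factor out as $h^\alpha/\Gamma(\alpha+2)$ and the remaining integer coefficient assembles into the second-difference pattern $(k-j+1)^{\alpha+1}-2(k-j)^{\alpha+1}+(k-j-1)^{\alpha+1}$ for the interior nodes $1\le j\le k-1$, with the modified endpoint weights at $j=0$ and $j=k$. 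Reconciling these boundary terms with the interior pattern, and keeping the algebra of the power expansions straight, is where I expect essentially all of the effort to go.

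The error bound is then comparatively routine. Because both ${_0J_x^\alpha}f|_{x=b}$ and $T(f,h,\alpha)$ are linear in $f$ and coincide when $f$ is replaced by $\widetilde{f}$, the error is simply
\[
E_T(f,h,\alpha)=\frac{1}{\Gamma(\alpha)}\int_0^b(b-t)^{\alpha-1}\bigl(\widetilde{f}(t)-f(t)\bigr)\,dt.
\]
For $f\in C^2[0,b]$ the standard linear interpolation estimate gives $|\widetilde{f}(t)-f(t)|\le\tfrac18\|f''\|_\infty h^2$ on every subinterval, hence uniformly on $[0,b]$. Pulling this uniform bound out of the integral and using $\int_0^b(b-t)^{\alpha-1}\,dt=b^\alpha/\alpha$ yields
\[
|E_T(f,h,\alpha)|\le\frac{1}{8\,\Gamma(\alpha+1)}\,\|f''\|_\infty\,b^\alpha h^2,
\]
which is precisely the asserted $O(h^2)$ estimate with $c'_\alpha=\tfrac{1}{8\Gamma(\alpha+1)}$ depending only on $\alpha$. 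The only point requiring care is that $(b-t)^{\alpha-1}$ is singular at $t=b$; but the singularity is integrable for $\alpha>0$ and the interpolation bound is uniform, so the estimate goes through without modification.
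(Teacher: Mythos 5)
Your proposal is correct and follows essentially the same route as the source: the paper states this theorem without proof, importing it from Odibat's papers \cite{bib:20,bib:21}, and its own lead-in text attributes the rule precisely to the product trapezoidal idea --- integrating the piecewise linear interpolant exactly against the weight $(b-t)^{\alpha-1}$ --- which is exactly your strategy. The computational identity you defer does check out (on each subinterval the substitution $u=b-t$ yields the moments of $u^{\alpha-1}$ and $u^{\alpha}$, and the adjacent-interval contributions assemble into the stated second differences $(k-j+1)^{\alpha+1}-2(k-j)^{\alpha+1}+(k-j-1)^{\alpha+1}$ and the endpoint weights), and your error argument is sound, even supplying the explicit admissible constant $c'_{\alpha}=\frac{1}{8\,\Gamma(\alpha+1)}$.
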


The following theorem gives an algorithm to approximate the Caputo fractional derivative
of an arbitrary order $\alpha>0$.

\begin{theorem}[See \cite{bib:20,bib:21}]
\label{thm:apprx:Cap}
Let $b>0$, $\alpha>0$ with $n-1 < \alpha \le n$, $n \in \mathbb{N}$,
and suppose that the interval $[0,b]$
is subdivided into $k$ subintervals $[x_j,x_{j+1}]$, $j = 0,\ldots,k-1$,
of equal distances $h=\frac bk$ by using the nodes $x_j=jh$, $j=0,1,\ldots,k$.
Then the modified trapezoidal rule
\begin{multline*}
C(f,h,\alpha)=\frac{h^{n-\alpha}}{\Gamma(n+2-\alpha)}
\Biggl[\left((k-1)^{n-\alpha+1}-(k-n+\alpha-1)k^{n-\alpha}\right)
f^{(n)}(0)\\
+f^{(n)}(b)
+\sum_{j=1}^{k-1}\left((k-j+1)^{n-\alpha+1}-2(k-j)^{n-\alpha+1}
+(k-j-1)^{n-\alpha+1}\right)f^{(n)}(x_j)\Biggr]
\end{multline*}
is an approximation to the Caputo fractional derivative ${_0^CD_x^\alpha} f|_{x=b}$:
\begin{equation*}
{_0^CD_x^\alpha} f|_{x=b}=C(f,h,\alpha)-E_C(f,h,\alpha).
\end{equation*}
Furthermore, if $f\in C^{n+2}[0,b]$, then
\begin{equation*}
|E_C(f,h,\alpha)|\leq c'_{n-\alpha}\|f^{(n+2)}\|_\infty b^{n-\alpha}h^2=O(h^2),
\end{equation*}
where $c'_{n-\alpha}$ is a constant depending only on $\alpha$.
\end{theorem}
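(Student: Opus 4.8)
The plan is to reduce Theorem~\ref{thm:apprx:Cap} to Theorem~\ref{thm:apprx:FI}, which has already been established, by exploiting the structural identity between the Caputo derivative and a Riemann--Liouville integral of the $n$-th ordinary derivative. The key observation is that, directly from Definition~\ref{def:Caputo}, we have
\begin{equation*}
{_0^CD_x^\alpha} f(x)
=\frac{1}{\Gamma(n-\alpha)}\int_0^x (x-t)^{n-\alpha-1} f^{(n)}(t)\,dt
= {_0J_x^{\,n-\alpha}}\bigl(f^{(n)}\bigr)(x).
\end{equation*}
Thus the Caputo derivative of $f$ of order $\alpha$ is nothing but the Riemann--Liouville fractional integral of order $\beta := n-\alpha$ applied to $g := f^{(n)}$. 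Since $n-1 < \alpha \le n$ forces $0 \le \beta < 1$, and in particular $\beta > 0$ except in the trivial integer case, the hypotheses of Theorem~\ref{thm:apprx:FI} are met with this choice of order and integrand.

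First I would set $\beta = n-\alpha$ and $g = f^{(n)}$, and write out the modified trapezoidal rule $T(g,h,\beta)$ exactly as defined in Theorem~\ref{thm:apprx:FI}. Substituting $\beta = n-\alpha$ into every exponent and into the Gamma-function argument, one checks term by term that the resulting expression coincides precisely with $C(f,h,\alpha)$: the prefactor $h^\beta/\Gamma(\beta+2)$ becomes $h^{n-\alpha}/\Gamma(n-\alpha+2) = h^{n-\alpha}/\Gamma(n+2-\alpha)$, the endpoint weight $(k-1)^{\beta+1}-(k-\beta-1)k^\beta$ becomes $(k-1)^{n-\alpha+1}-(k-n+\alpha-1)k^{n-\alpha}$, and the interior second-difference weights $(k-j+1)^{\beta+1}-2(k-j)^{\beta+1}+(k-j-1)^{\beta+1}$ take the stated form, while $g(0)=f^{(n)}(0)$, $g(x_j)=f^{(n)}(x_j)$, and $g(b)=f^{(n)}(b)$. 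Hence $C(f,h,\alpha) = T(f^{(n)},h,n-\alpha)$ as an algebraic identity, and the approximation statement ${_0^CD_x^\alpha}f|_{x=b}=C(f,h,\alpha)-E_C(f,h,\alpha)$ follows by defining $E_C(f,h,\alpha) := E_T(f^{(n)},h,n-\alpha)$.

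For the error bound I would apply the estimate of Theorem~\ref{thm:apprx:FI} to $g=f^{(n)}$ with order $\beta=n-\alpha$. That theorem requires $g \in C^2[0,b]$; since $f \in C^{n+2}[0,b]$, we indeed have $f^{(n)} \in C^2[0,b]$ with $(f^{(n)})'' = f^{(n+2)}$, so the hypothesis is satisfied. The conclusion $|E_T(g,h,\beta)| \le c'_\beta \|g''\|_\infty b^\beta h^2$ then reads $|E_C(f,h,\alpha)| \le c'_{n-\alpha}\|f^{(n+2)}\|_\infty b^{n-\alpha}h^2 = O(h^2)$, which is exactly the claimed bound, with the constant $c'_{n-\alpha}$ inherited directly from the integral case.

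The main obstacle is not analytic but notational and combinatorial: one must verify carefully that substituting $\beta=n-\alpha$ into the coefficients of Theorem~\ref{thm:apprx:FI} reproduces, symbol for symbol, the coefficients written in Theorem~\ref{thm:apprx:Cap} — in particular tracking the sign and the $(k-n+\alpha-1)$ term in the endpoint weight and confirming the $\Gamma(n+2-\alpha)$ normalization. I would also note the edge case $\alpha=n$ (so $\beta=0$), where the fractional integral degenerates; one should remark that the formula is understood in the limiting sense or restrict attention to $\alpha < n$, so that $\beta>0$ and Theorem~\ref{thm:apprx:FI} applies verbatim. Once the identity $C(f,h,\alpha)=T(f^{(n)},h,n-\alpha)$ is confirmed, the entire result transfers with no further work.
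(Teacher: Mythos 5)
Your reduction is correct, and it is in fact the standard route: the paper itself does not prove Theorem~\ref{thm:apprx:Cap} but imports it from \cite{bib:20,bib:21}, and your argument --- identifying ${_0^CD_x^\alpha} f = {_0J_x^{\,n-\alpha}}\bigl(f^{(n)}\bigr)$ directly from Definition~\ref{def:Caputo} and then applying Theorem~\ref{thm:apprx:FI} with order $\beta = n-\alpha$ and integrand $g = f^{(n)}$, so that $C(f,h,\alpha)=T(f^{(n)},h,n-\alpha)$ and $E_C(f,h,\alpha)=E_T(f^{(n)},h,n-\alpha)$ --- is precisely how the result is derived in those cited sources, with the regularity bookkeeping ($f\in C^{n+2}$ giving $f^{(n)}\in C^2$ and $(f^{(n)})''=f^{(n+2)}$) handled exactly as you state. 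Your caution about the edge case $\alpha=n$ is reasonable but turns out to be harmless: when $\beta=0$ the prefactor is $h^0/\Gamma(2)=1$, the endpoint weight becomes $(k-1)-(k-1)=0$, and every interior second difference $(k-j+1)-2(k-j)+(k-j-1)$ vanishes, so $C(f,h,n)=f^{(n)}(b)$, which is the exact value of the Caputo derivative at order $n$, and the statement holds there with zero error.
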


In the next section we use Theorem~\ref{thm:apprx:Cap} to find
an approximation solution to a generalized Abel integral equation.
The reader interested in other useful approximations for fractional
operators is referred to \cite{MyID:225,MyID:244,MyID:259}
and references therein.


\section{Main Results}
\label{sec:4}

Consider the following Abel integral equation of first kind:
\begin{equation}
\label{eq:4.1}
f(x)=\int_0^x\frac{g(t)}{(x-t)^{\alpha}}dt,
\quad 0<\alpha<1,
\quad 0\leq x \leq b,
\end{equation}
where $f\in C^1[a,b]$ is a given function satisfying $f(0)=0$
and $g$ is the unknown function we are looking for.

\begin{theorem}
\label{thm:mr:anal}
The solution to problem \eqref{eq:4.1} is
\begin{equation}
\label{eq:exact:sol}
g(x)=\frac{\sin(\alpha\pi)}{\pi}\int_0^x\frac{f'(t)}{(x-t)^{1-\alpha}}dt.
\end{equation}
\end{theorem}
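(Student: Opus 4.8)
The plan is to recognize the Abel integral equation \eqref{eq:4.1} as a fractional integral equation and then invert it using the composition rules already established in Section~\ref{sec:2}. First I would rewrite \eqref{eq:4.1} in terms of the left Riemann--Liouville fractional integral. Setting $\beta = 1-\alpha$, so that $0 < \beta < 1$, the kernel $(x-t)^{-\alpha} = (x-t)^{\beta - 1}$ matches the kernel in Definition~\ref{def:RL:int} up to the normalizing factor $1/\Gamma(\beta)$. Thus \eqref{eq:4.1} becomes
\begin{equation*}
f(x) = \Gamma(1-\alpha)\, {_0J_x^{1-\alpha}}\, g(x).
\end{equation*}
The idea is that applying the inverse operator to both sides will solve for $g$.

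Next I would apply the Caputo derivative ${_0^CD_x^{1-\alpha}}$ to both sides, using the inversion relation \eqref{eq:2.18}, namely ${_a^CD_x^\alpha}\, {_aJ_x^\alpha} f = f$, which holds here because $g \in C^n$ or $g \in L^\infty$. This gives
\begin{equation*}
g(x) = \frac{1}{\Gamma(1-\alpha)}\, {_0^CD_x^{1-\alpha}} f(x).
\end{equation*}
Since $0 < 1-\alpha < 1$, we take $n=1$ in Definition~\ref{def:Caputo}, so the Caputo derivative of order $1-\alpha$ is
\begin{equation*}
{_0^CD_x^{1-\alpha}} f(x) = \frac{1}{\Gamma(\alpha)} \int_0^x (x-t)^{\alpha - 1} f'(t)\, dt.
\end{equation*}
Substituting this in and combining the gamma factors, I would obtain $g(x) = \frac{1}{\Gamma(1-\alpha)\Gamma(\alpha)} \int_0^x (x-t)^{\alpha-1} f'(t)\, dt$.

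The final step is to simplify the constant using the reflection formula for the gamma function, $\Gamma(\alpha)\Gamma(1-\alpha) = \frac{\pi}{\sin(\alpha\pi)}$, valid for $0 < \alpha < 1$. This turns the coefficient $\frac{1}{\Gamma(\alpha)\Gamma(1-\alpha)}$ into $\frac{\sin(\alpha\pi)}{\pi}$, yielding exactly \eqref{eq:exact:sol}. The hypothesis $f(0)=0$ is what makes this clean: because it guarantees the boundary term vanishes, the Caputo and Riemann--Liouville derivatives of $f$ coincide (via the relation quoted before \eqref{eq:2.18}), so the inversion \eqref{eq:2.18} legitimately recovers $g$ with no extra summation terms.

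The main obstacle I anticipate is justifying the application of the inverse operator rigorously — that is, confirming that the hypotheses of the inversion identity \eqref{eq:2.18} are genuinely met, so that ${_0^CD_x^{1-\alpha}}$ really does annihilate the fractional integral and return $g$ exactly rather than up to correction terms. This is where the regularity assumption $f \in C^1$ together with $f(0)=0$ does the essential work, and I would want to check that $g$ itself has enough regularity (or lies in the appropriate $L^\infty$ or $C^n$ class) for \eqref{eq:2.18} to apply. Once that functional-analytic point is secured, the remainder is the purely algebraic manipulation of the gamma factors described above.
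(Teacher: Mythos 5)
Your proposal is correct and takes essentially the same route as the paper's own proof: you rewrite \eqref{eq:4.1} as $f(x)=\Gamma(1-\alpha)\,{_0J_x^{1-\alpha}}g(x)$, invert via \eqref{eq:2.18} to get ${_0^CD_x^{1-\alpha}}f(x)=\Gamma(1-\alpha)g(x)$, expand the Caputo derivative with $n=1$, and simplify the constant with the reflection formula $\Gamma(\alpha)\Gamma(1-\alpha)=\pi/\sin(\alpha\pi)$, exactly as the paper does. Your extra attention to the regularity hypotheses behind \eqref{eq:2.18} and the role of $f(0)=0$ merely makes explicit what the paper leaves implicit, and does not constitute a different argument.
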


\begin{proof}
According to Definition~\ref{def:RL:int},
we can write \eqref{eq:4.1} in the equivalent form
\begin{equation*}
f(x)=\Gamma{(1-\alpha)} \, {_0J_x^{1-\alpha}} g(x).
\end{equation*}
Then, using \eqref{eq:2.18}, it follows that
\begin{equation}
\label{eq:abel:CaputoForm}
_0^CD_x^{1-\alpha} f(x)=\Gamma{(1-\alpha)}g(x).
\end{equation}
Therefore,
$$
g(x)=\frac{1}{\Gamma(\alpha)\Gamma(1-\alpha)}\int_0^x\frac{f'(t)}{(x-t)^{1-\alpha}}dt
=\frac{\sin(\alpha\pi)}{\pi}\int_0^x\frac{f'(t)}{(x-t)^{1-\alpha}}dt,
$$
where we have used the identity
$\pi = \sin(\alpha\pi)\Gamma(\alpha)\Gamma(1-\alpha)$.
\end{proof}

Our next theorem gives an algorithm to approximate the solution \eqref{eq:exact:sol}
to problem \eqref{eq:4.1}.

\begin{theorem}
\label{thm:mr}
Let $0<x<b$ and suppose that the interval $[0,x]$ is subdivided into $k$
subintervals $[t_j,t_{j+1}]$, $j=0,\ldots,k-1$, of equal length $h=\frac{x}{k}$
by using the nodes $t_j=jh$, $j=0,\ldots,k$. An approximate solution
$\widetilde{g}$ to the solution $g$ of the Abel integral equation \eqref{eq:4.1}
is given by
\begin{multline}
\label{eq:4.12}
\widetilde{g}(x)
=\frac{h^{\alpha}}{\Gamma(1-\alpha)\Gamma(2+\alpha)}
\Biggl[\left((k-1)^{1+\alpha}-(k-1-\alpha)k^{\alpha}\right)
f'(0)+f'(x)\\
+\sum_{j=1}^{k-1}\left((k-j+1)^{1+\alpha}-2(k-j)^{1+\alpha}
+(k-j-1)^{1+\alpha}\right)f'(t_j)\Biggr].
\end{multline}
Moreover, if $f\in C^3[0,x]$, then
$g(x)=\widetilde{g}(x)-\frac{1}{\Gamma(1-\alpha)}E(x)$
with
\begin{equation*}
|E(x)|\leq c'_{\alpha}\|f'''\|_\infty x^{\alpha} h^2 = O(h^2),
\end{equation*}
where $c'_{\alpha}$ is a constant depending only on $\alpha$
and $\|f'''\|_\infty =\max_{t\in[0,x]} |f'''(t)|$.
\end{theorem}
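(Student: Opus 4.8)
The plan is to reduce the statement to a direct application of Theorem~\ref{thm:apprx:Cap}. The substantive observation is that the proof of Theorem~\ref{thm:mr:anal} already supplies the identity \eqref{eq:abel:CaputoForm}, which I would rewrite as
\begin{equation*}
g(x)=\frac{1}{\Gamma(1-\alpha)}\,{_0^CD_x^{1-\alpha}}f(x).
\end{equation*}
Thus the unknown $g$ is, up to the constant factor $1/\Gamma(1-\alpha)$, nothing but the left Caputo fractional derivative of $f$ of order $1-\alpha$ evaluated at the right endpoint $x$. Since $0<\alpha<1$ we have $0<1-\alpha<1$, so the order $\beta:=1-\alpha$ satisfies $n-1<\beta\le n$ with $n=1$, and Theorem~\ref{thm:apprx:Cap} applies directly. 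Recognizing this reduction is really the only conceptual content; everything that follows is bookkeeping.

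Next I would specialize the approximation $C(f,h,\beta)$ of Theorem~\ref{thm:apprx:Cap} to the present situation by setting $\beta=1-\alpha$ and $n=1$ (so that $f^{(n)}=f'$), replacing the endpoint $b$ by $x$ and the nodes $x_j$ by $t_j=jh$. The elementary index identities
\begin{equation*}
n-\beta=\alpha,\qquad n+2-\beta=2+\alpha,\qquad n-\beta+1=1+\alpha,\qquad k-n+\beta-1=k-1-\alpha
\end{equation*}
then show that $\tfrac{1}{\Gamma(1-\alpha)}\,C(f,h,1-\alpha)$ coincides exactly with the right-hand side of \eqref{eq:4.12}, which identifies $\widetilde g(x)$ as the claimed modified trapezoidal approximation. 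I expect this matching of exponents and the prefactor $\Gamma(2+\alpha)$ against $\Gamma(n+2-\beta)$ to be the only place where care is required; it is tedious rather than hard, and carries no genuine obstacle.

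Finally, the error estimate transfers verbatim. Writing $E(x):=E_C(f,h,1-\alpha)$ for the remainder furnished by Theorem~\ref{thm:apprx:Cap}, the decomposition ${_0^CD_x^{1-\alpha}}f|_{x}=C(f,h,1-\alpha)-E(x)$ gives, after multiplication by $1/\Gamma(1-\alpha)$,
\begin{equation*}
g(x)=\widetilde g(x)-\frac{1}{\Gamma(1-\alpha)}E(x).
\end{equation*}
The bound $|E_C(f,h,\beta)|\le c'_{n-\beta}\|f^{(n+2)}\|_\infty b^{n-\beta}h^2$ from Theorem~\ref{thm:apprx:Cap} becomes, with $n=1$ and $\beta=1-\alpha$, precisely $|E(x)|\le c'_{\alpha}\|f'''\|_\infty x^{\alpha}h^2=O(h^2)$. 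The hypothesis $f\in C^3[0,x]$ is exactly the smoothness requirement $f\in C^{n+2}[0,x]$ demanded by Theorem~\ref{thm:apprx:Cap} for this bound to hold, which is why it appears in the statement; note that $C^3$ also guarantees the $C^1$ regularity assumed for \eqref{eq:4.1}, so no additional hypotheses are needed.
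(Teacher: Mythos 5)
Your proposal is correct and follows exactly the paper's own proof: the paper likewise rewrites \eqref{eq:abel:CaputoForm} as $g(x)=\frac{1}{\Gamma(1-\alpha)}\,{_0^CD_x^{1-\alpha}}f(x)$ and invokes Theorem~\ref{thm:apprx:Cap} at the nodes $t_j$, treating the result as a direct consequence. If anything, your write-up is more complete, since you make explicit the substitutions $\beta=1-\alpha$, $n=1$ and the index identities that the paper leaves unstated.
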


\begin{proof}
We want to approximate the Caputo derivative in \eqref{eq:abel:CaputoForm},
i.e., to approximate
\begin{equation*}
g(x)=\frac{1}{\Gamma{(1-\alpha)}} {_0^CD_x^{1-\alpha}} f(x)
= \frac{\sin(\alpha \pi) \, \Gamma(\alpha)}{\pi} {_0^CD_x^{1-\alpha}} f(x).
\end{equation*}
If the Caputo fractional derivative of order $1-\alpha$ for $f$, $0<\alpha<1$,
is calculated at collocation nodes $t_j$, $j=0,\ldots,k$,
then the result is a direct consequence of Theorem~\ref{thm:apprx:Cap}.
\end{proof}


\section{Illustrative Examples}
\label{sec:5}

We exemplify the approximation method given by Theorem~\ref{thm:mr}
with three Abel integral equations whose exact solutions are found
from Theorem~\ref{thm:mr:anal}. The computations were done with
the Computer Algebra System \textsf{Maple}. The complete code is provided
in Appendix~\ref{app}.

\begin{example}
\label{Example1}
Consider the Abel integral equation
\begin{equation}
\label{eq:prb1}
e^x-1=\int_0^x\frac{g(t)}{(x-t)^{1/2}}dt.
\end{equation}
The exact solution to \eqref{eq:prb1} is given by Theorem~\ref{thm:mr:anal}:
\begin{equation}
\label{eq:es:ex1}
g(x)=\frac{e^x}{\sqrt{\pi}} {erf}(\sqrt{x}),
\end{equation}
where $erf(x)$ is the error function, that is,
$$
erf(x)=\frac{2}{\sqrt{\pi}}\int_ 0^x e^{-t^2}dt.
$$
In Table~\ref{Table4.1} we present the approximate values for $g(x_i)$,
$x_i=0.1,0.2,0.3$, obtained from Theorem~\ref{thm:mr} with $k=1$, $10$, $100$.
\begin{table}[ht]
\caption{Approximated values $\widetilde{g}(x_i)$ \eqref{eq:4.12}
to the solution \eqref{eq:es:ex1} of \eqref{eq:prb1}
obtained from Theorem~\ref{thm:mr} with $k=1$, $10$, $100$.}
\label{Table4.1}
\begin{tabular}{|c|c|c|c|c|c|}
\hline
$x_i$ & $k=1$ & $k=10$ & $k=100$ & Exact solution \eqref{eq:es:ex1} & Error $\Delta_{k=100}$\\\hline
$0.1$   & $0.2154319668$ & $0.2152921762$  & $0.2152904646$   & $0.2152905021$ & $3.75 \times 10^{-8}$\\
$0.2$   & $0.3267280013$ & $0.3258941876$  & $0.3258841023$   & $0.3258840762$ & $2.61 \times 10^{-8}$\\
$0.3$   & $0.4300194238$ & $0.4275954299$  & $0.4275658716$   & $0.4275656575$ & $2.14 \times 10^{-7}$\\\hline
\end{tabular}
\end{table}
\end{example}

\begin{example}
\label{Example2}
Consider the following Abel integral equation of first kind:
\begin{equation}
\label{eq:prb2}
x=\int_0^x\frac{g(t)}{(x-t)^{4/5}}dt.
\end{equation}
The exact solution \eqref{eq:exact:sol} to \eqref{eq:prb2} takes the form
\begin{equation}
\label{eq:es:ex2}
g(x)=\frac{5}{4}\,\frac{\sin\left(\frac{\pi}{5}\right)}{\pi}\,{x}^{4/5}.
\end{equation}
We can see the numerical approximations of $g(x_i)$,
$x_i=0.4,0.5,0.6$, obtained from Theorem~\ref{thm:mr} with
$k=1$, $10$  in Table~\ref{Table4.2}.
\begin{table}[ht]
\caption{Approximated values $\widetilde{g}(x_i)$ \eqref{eq:4.12}
to the solution \eqref{eq:es:ex2} of \eqref{eq:prb2}
obtained from Theorem~\ref{thm:mr} with $k=1$, $10$.}
\label{Table4.2}
\begin{tabular}{|c|c|c|c|c|}
\hline
$x_i$ & $k=1$  & $k=10$ & Exact solution \eqref{eq:es:ex2} & Error $\Delta_{k=10}$\\\hline
$0.4$   & $0.1123639036$ & $0.1123639036$ & $0.1123639037$ & $1\times 10^{-10}$\\
$0.5$   & $0.1343243751$ & $0.1343243751$ & $0.1343243752$ & $1\times 10^{-10}$\\
$0.6$   & $0.1554174667$ & $0.1554174668$ & $0.1554174668$ & $\le 10^{-11}$ \\\hline
\end{tabular}
\end{table}
\end{example}

\begin{example}
\label{Example3}
Consider now the Abel integral equation
\begin{equation}
\label{eq:prb3}
x^{7/6}=\int_0^x\frac{g(t)}{(x-t)^{1/3}}dt.
\end{equation}
Theorem~\ref{eq:exact:sol} asserts that
\begin{equation}
\label{eq:es:ex3}
g(x)= \frac{7\sqrt{3}}{12\,\pi}\,\displaystyle \int_{0}^{x}\!
{\frac{t^{1/6}}{ \left( x-t\right)^{2/3}}}{dt}.
\end{equation}
The numerical approximations of $g(x_i)$, $x_i=0.6,0.7,0.8$,
obtained from Theorem~\ref{thm:mr} with $k=10, 100, 1000$,
are given in Table~\ref{Table4.3}.
\begin{table}[ht]
\caption{Approximated values $\widetilde{g}(x_i)$ \eqref{eq:4.12}
to the solution \eqref{eq:es:ex3} of \eqref{eq:prb3}
obtained from Theorem~\ref{thm:mr} with $k=10, 100, 1000$.}
\label{Table4.3}
\begin{tabular}{|c|c|c|c|c|c|}
\hline
$x_i$ & $k=10$ & $k=100$ & $k=1000$ & Exact solution \eqref{eq:es:ex3} & Error $\Delta_{k=1000}$\\\hline
$0.6$ & $0.6921182258$ & $0.6981839386$  & $0.6985886509$  & $0.6986144912$ & $0.0000258403$ \\
$0.7$ & $0.7475731262$ & $0.7541248475$  & $0.7545620072$  & $0.7545898940$ & $0.0000278868$\\
$0.8$ & $0.7991892838$ & $0.8061933689$  & $0.8066606797$  & $0.8066905286$ & $0.0000298489$\\\hline
\end{tabular}
\end{table}
\end{example}


\appendix


\section{Maple Code for Examples of Section~\ref{sec:5}}
\label{app}

We provide here all the definitions and computations done in \textsf{Maple} for
the problems considered in Section~\ref{sec:5}. The definitions
follow closely the notations introduced along the paper, and should be clear
even for readers not familiar with the Computer Algebra System \textsf{Maple}.

\bigskip

\small

\noindent \verb!> # Solution given by Theorem!~\ref{thm:mr:anal}
\begin{verbatim}
> g := (f, alpha, x) -> sin(alpha*Pi)*(int((diff(f(t), t))/(x-t)^(1-alpha),
                                            t = 0..x))/Pi:
\end{verbatim}
\noindent \verb!> # Approximation given by Theorem!~\ref{thm:mr}
\begin{verbatim}
> gtilde := (f, alpha, h, k, x) -> h^alpha
             *(((k-1)^(1+alpha)-(k-1-alpha)*k^alpha)
             *(D(f))(0)+(D(f))(x)+sum(((k-j+1)^(1+alpha)-2*(k-j)^(1+alpha)
             +(k-j-1)^(1+alpha))*(D(f))(j*h), j = 1 .. k-1))
             /(GAMMA(1-alpha)*GAMMA(2+alpha)):
\end{verbatim}
\noindent \verb!> # Example!~\ref{Example1}
\begin{verbatim}
> f1 := x -> exp(x)-1:
> g(f1, 1/2, x);
\end{verbatim}
$$
{\frac {{{\rm erf}\left(\sqrt {x}\right)}{{\rm e}^{x}}}{\sqrt {\pi }}}
$$
\begin{verbatim}
> ExactValues1 := evalf([g(f1,1/2,0.1), g(f1,1/2,0.2), g(f1,1/2,0.3)]);
\end{verbatim}
$$
           [0.2152905021, 0.3258840762, 0.4275656575]
$$
\begin{verbatim}
> ApproximateValues1 := k -> evalf([gtilde(f1, 1/2, 0.1/k, k, 0.1),
                                    gtilde(f1, 1/2, 0.2/k, k, 0.2),
                                    gtilde(f1, 1/2, 0.3/k, k, 0.3)]):
> ApproximateValues1(1);
\end{verbatim}
$$
           [0.2154319668, 0.3267280013, 0.4300194238]
$$
\begin{verbatim}
> ApproximateValues1(10);
\end{verbatim}
$$
           [0.2152921762, 0.3258941876, 0.4275954299]
$$
\begin{verbatim}
> ApproximateValues1(100);
\end{verbatim}
$$
           [0.2152904646, 0.3258841023, 0.4275658716]
$$
\noindent \verb!> # Example!~\ref{Example2}
\begin{verbatim}
> f2 := x -> x:
> g(f2, 4/5, x);
\end{verbatim}
$$
\frac{5}{4}\,{\frac {\sin \left( \frac{1}{5}\,\pi  \right) {x}^{4/5}}{\pi }}
$$
\begin{verbatim}
> ExactValues2 := evalf([g(f2,4/5,0.4), g(f2, 4/5, 0.5), g(f2, 4/5, 0.6)]);
\end{verbatim}
$$
           [0.1123639037, 0.1343243752, 0.1554174668]
$$
\begin{verbatim}
> ApproximateValues2 := k -> evalf([gtilde(f2, 4/5, 0.4/k, k, 0.4),
                                    gtilde(f2, 4/5, 0.5/k, k, 0.5),
                                    gtilde(f2, 4/5, 0.6/k, k, 0.6)]):
> ApproximateValues2(1);
\end{verbatim}
$$
           [0.1123639036, 0.1343243751, 0.1554174667]
$$
\begin{verbatim}
> ApproximateValues2(10);
\end{verbatim}
$$
           [0.1123639036, 0.1343243751, 0.1554174668]
$$
\noindent \verb!> # Example!~\ref{Example3}
\begin{verbatim}
> f3 := x -> x^(7/6):
> g(f3, 1/3, x);
\end{verbatim}
$$
\frac{1}{2}\,\frac{\sqrt {3}
\displaystyle \int_{0}^{x}\!\frac{7}{6}\,{\frac{t^{1/6}}{ \left( x-t
 \right)^{2/3}}}{dt}}{\pi}
$$
\begin{verbatim}
> ExactValues3 := evalf([g(f3,1/3,0.6), g(f3,1/3,0.7), g(f3,1/3,0.8)]);
\end{verbatim}
$$
           [0.6986144912, 0.7545898940, 0.8066905286]
$$
\begin{verbatim}
> ApproximateValues3 := k -> evalf([gtilde(f3, 1/3, 0.6/k, k, 0.6),
                                    gtilde(f3, 1/3, 0.7/k, k, 0.7),
                                    gtilde(f3, 1/3, 0.8/k, k, 0.8)]):
> ApproximateValues3(1);
\end{verbatim}
$$
           [0.5605130027, 0.6054232384, 0.6472246667]
$$
\begin{verbatim}
> ApproximateValues3(10);
\end{verbatim}
$$
           [0.6921182258, 0.7475731262, 0.7991892838]
$$
\begin{verbatim}
> ApproximateValues3(100);
\end{verbatim}
$$
           [0.6981839386, 0.7541248475, 0.8061933689]
$$
\begin{verbatim}
> ApproximateValues3(1000);
\end{verbatim}
$$
           [0.6985886509, 0.7545620072, 0.8066606797]
$$

\normalsize


\section*{Acknowledgements}

This work is part of first author's PhD project.
Partially supported by Azad University, Iran;
and CIDMA--FCT, Portugal, within project
PEst-OE/MAT/UI4106/ 2014. 
Jahanshahi was supported by a scholarship from the Ministry of Science,
Research and Technology of the Islamic Republic of Iran,
to visit the University of Aveiro.
The hospitality and the excellent working conditions
at the University of Aveiro are here gratefully acknowledged.
The authors would like also to thank
three anonymous referees for valuable comments.



\end{document}